\documentclass{amsart}
\usepackage{amsmath}
\usepackage{amssymb}
\usepackage{amscd}
\usepackage{latexsym}
\usepackage{graphicx} 

\theoremstyle{plain}
\newtheorem{thm}{Theorem}

\newtheorem*{defin}{Definition}

\newtheorem*{claim}{Claim}

\newtheorem*{propo}{Proposition}

\theoremstyle{definition}

\newtheorem{remk}{Remark}

\parskip.06in

\def \Z {\mathbf{Z}}

\def \G {\Gamma}

\def \- {\!\smallsetminus\!}

\def \sw {\mathcal{SW}}

\def\fs{\mathfrak{s}}

\def\spinc{spin$^{\text{c}}$}

\begin{document}

\baselineskip.5cm
\title {A note on knot surgery}
\thanks{This work was partially supported by NSF RTG Grant DMS-0353717 and by DMS-0704091.}
\author[Nathan S. Sunukjian]{Nathan S. Sunukjian}
\address{Department of Mathematics, Stony Brook University \newline
\hspace*{.375in}Stony Brook, New York 11794}
\email{\rm{nsunukjian@math.sunysb.edu}}
\begin{abstract} In this paper we clarify an issue in the knot surgery construction of Fintushel and Stern. Using knot surgery, they construct an infinite number of smooth structures on 4-manifolds satisfying certain conditions, but they do not explicitly work out the circumstances under which two manifolds that arise from their construction will fail to be diffeomorphic on the grounds of Seiberg-Witten theory. This paper fills in that gap.\end{abstract}
\maketitle

In their paper \cite{KL4M}, Fintushel and Stern produce infinite families of homeomorphic but non-diffeomorphic $4$-manifolds using a technique they call knot surgery, whereby the neighborhood of a torus is replaced with something homologically equivalent, but smoothly ``knotted''. Remarkably this process does not change the homeomorphism type of a 4-manifold, and equally remarkable is the effect on the Seiberg-Witten invariant. 

For the purposes of this paper, we shall think of the Seiberg-Witten invariant of $X$, denoted $SW_X$ as an element of the group ring $\Z[H_2(X)]$. This usage will be clarified below.

Specifically, Fintushel and Stern prove the following:

\begin{thm}[{\cite[Theorem 1.1]{KL4M}}] \label{FSknot} Let $T$ be an embedded torus in a simply connected $4$-manifold $X$ with $[T]^2=0$ and let $K$ be a knot in $S^3$. Suppose further that $\pi_1(X\setminus T) = 1$. Then $X$ is homeomorphic to the knot surgered manifold manifold $X_K := (X \setminus \nu T) \cup  (S_1 \times S^3 \setminus \nu K)$, the only requirement on the gluing being that the longitude of $K$ be taken to the meridian of $T$.

Moreover, $SW_{X_K}$ is obtained from $SW_X$ via multiplication by the symmetrized Alexander polynomial of $K$: 

\[\label{f:SWknot} SW_{X_K} = SW_X \cdot \Delta_K(2[T]) \]  
\end{thm}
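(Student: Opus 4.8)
I would write $X=(X\setminus\nu T)\cup_{T^3}\nu T$ with $\nu T\cong T^2\times D^2$, and note that $S^1\times(S^3\setminus\nu K)$ likewise has boundary $T^3$ and the homology of $T^2$. The gluing hypothesis (longitude of $K\mapsto$ meridian of $T$) is exactly what makes the Mayer--Vietoris sequences of $X$ and of $X_K$ isomorphic: on both sides the kernel of $H_1(T^3)\to H_1(\text{the glued-in piece})$ is generated by the curve that goes to $\mu_T$ --- namely $\mu_T$ itself on the $\nu T$ side and $\lambda_K$ on the other. Hence $H_*(X_K)\cong H_*(X)$, and since every class of $H_2(X_K)$ is represented by a surface pushed off the gluing region together with one boundary-parallel torus corresponding to $[T]$, the intersection forms agree as well. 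For the fundamental group, van Kampen gives $\pi_1(X_K)=\pi_1\!\big(S^1\times(S^3\setminus\nu K)\big)\big/\langle\!\langle\,\mathrm{im}\,\pi_1(T^3)\,\rangle\!\rangle$ because $\pi_1(X\setminus\nu T)=\pi_1(X\setminus T)=1$; the image of $\pi_1(T^3)$ contains the $S^1$ factor and the meridian $\mu_K$, which normally generates the knot group, so $\pi_1(X_K)=1$. Both manifolds are smooth, hence have vanishing Kirby--Siebenmann invariant, and Freedman's classification then yields that $X$ and $X_K$ are homeomorphic.

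\textbf{Reducing the Seiberg--Witten formula to a fiber sum.} Let $M_K$ denote $0$-framed surgery on $K$ and let $m\subset M_K$ be the core of the surgery solid torus, so that $M_K\setminus\nu m=S^3\setminus\nu K$ and $S^1\times m$ is a square-zero torus in $S^1\times M_K$. Then
\[
S^1\times(S^3\setminus\nu K)=(S^1\times M_K)\setminus\nu(S^1\times m),
\]
so $X_K$ is the generalized fiber sum of $X$ along $T$ with $S^1\times M_K$ along $S^1\times m$ (the normal framings match because $0$-surgery carries $\lambda_K$ to the meridian of $m$). I would then invoke the Seiberg--Witten gluing/product formula along the separating $T^3$ (Morgan--Mrowka--Szab\'o, after Morgan--Szab\'o--Taubes and Fintushel--Stern) to express $SW_{X_K}$ as the product of the relative invariant of $X\setminus\nu T$ with that of $(S^1\times M_K)\setminus\nu(S^1\times m)$. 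Comparing against the decomposition $X=(X\setminus\nu T)\cup_{T^3}\nu T$, whose $\nu T$-side relative invariant is $1$, identifies the first factor with $SW_X$; so the theorem reduces to computing the relative invariant of $S^1\times M_K$ along $S^1\times m$ and checking that it equals $\Delta_K$ with the Alexander variable $t$ specialized to $2[T]$.

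\textbf{The knot-complement contribution and the main obstacle.} For that computation I see two routes. The three-dimensional route: by the Meng--Taubes theorem the Seiberg--Witten invariant of $S^1\times M$ equals the Milnor--Turaev torsion of $M$, and the torsion of $0$-surgery on $K$ is, up to units, $\Delta_K(t)$ with $t$ the generator of $H_1(M_K)=\Z\langle\mu_K\rangle$; the $b_1=1$ wall-crossing contribution is exactly what absorbs the customary $(t-1)$ denominator and leaves the genuine Laurent polynomial $\Delta_K$. The original Fintushel--Stern route: any $K$ is obtained from the unknot by finitely many local crossing changes, each realized by a $(\pm1)$-surgery on a square-zero torus $T_c=S^1\times c\subset X_K$; the Seiberg--Witten surgery formula for such tori relates the $0$-, $(+1)$-, and $(-1)$-surgered manifolds by a skein identity formally identical to $\Delta_{K_+}-\Delta_{K_-}=(t^{1/2}-t^{-1/2})\Delta_{K_0}$, and since $X_U=X$ and $\Delta_U=1$, induction on the unknotting number closes the argument. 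Either way the two hard points are the same: (a) the Seiberg--Witten gluing/surgery formula along $T^3$ itself, with the $b^+=1$ chamber subtleties it entails; and (b) --- the bookkeeping point most likely to trip one up --- verifying that the homology class tracked by the knot-complement piece maps to $2[T]$, and not $[T]$, in $H_2(X_K)$. The factor of $2$ is forced because the Seiberg--Witten invariant is recorded through the determinant classes $c_1(\mathfrak s)$: moving the relevant $\mathrm{spin}^c$ structure on $S^1\times M_K$ by the generator of $H^2$ Poincar\'e dual to $S^1\times\mu_K$ shifts $c_1(\mathfrak s)$ by \emph{twice} that class, which is $2[T]$ inside $X_K$. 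This is precisely the ``$2$'' in $\Delta_K(2[T])$.
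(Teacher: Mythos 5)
This theorem is quoted verbatim from Fintushel--Stern \cite{KL4M} and the paper under review offers no proof of it, so there is no internal argument to compare yours against; I can only judge your sketch against the original. On that score it is a faithful outline. The homeomorphism half (Mayer--Vietoris plus van Kampen plus Freedman) is the standard argument, and your observation that the gluing condition $\lambda_K\mapsto\mu_T$ matches up the kernels of $H_1(T^3)\to H_1(\nu T)$ and $H_1(T^3)\to H_1(S^1\times(S^3\setminus\nu K))$ is exactly the right way to see that the homology and intersection form are unchanged. The reformulation of $X_K$ as the fiber sum of $X$ along $T$ with $S^1\times M_K$ along $S^1\times m$, where $M_K$ is $0$-surgery on $K$, is precisely Fintushel and Stern's starting point, and both routes you name for the knot-complement contribution --- the skein-relation induction on crossing changes realized by surgeries on null-homologous tori, and the Meng--Taubes identification of the invariant of $S^1\times M_K$ with Milnor--Turaev torsion --- are the ones used in \cite{KL4M} and in later expositions. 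Your account of the factor of $2$, namely that twisting the \spinc{} structure by the class dual to $S^1\times\mu_K$ moves $c_1(\fs)$ by \emph{twice} that class, is correct and is indeed the point most often garbled.

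As a proof, however, this remains an outline: the two items you yourself flag --- the Seiberg--Witten gluing/surgery formula along $T^3$, with the $b^+=1$ chamber issues on the small pieces, and the relative-invariant computation for the knot complement --- are where essentially all of the analytic content of the theorem lives, and you invoke them rather than establish them. Two smaller inaccuracies to repair if you flesh this out: the Turaev torsion of $0$-surgery on $K$ is $\Delta_K(t)/(t-1)^2$ up to units, not $\Delta_K(t)$ (your remark about wall-crossing absorbing the denominator is the correct resolution, but it should be stated precisely); and resolving a crossing of a knot produces a two-component link, so the skein induction genuinely requires the link version of the construction and the multivariable Alexander polynomial, as in \cite{KL4M}, not just the knot case. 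Neither point undermines the architecture of your sketch.
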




What is not prima facie evident from this result is that knots with two different Alexander polynomials will \emph{always} give non-equivalent knot surgeries. The purpose of this note is to clarify and resolve this issue.

\begin{thm}\label{main} If $K_1$ and $K_2$ are knots with different Alexander polynomials, then $X_{K_1}$ and $X_{K_2}$ cannot be diffeomorphic. 
\end{thm}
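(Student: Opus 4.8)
The plan is to recover the Alexander polynomial from $SW_{X_K}$, regarded as an element of the group ring $\Z[H_2(X_K)]$, and then to verify that this extraction is insensitive to the diffeomorphism type. I will assume, as is implicit in this setting, that $b^+(X)>1$, that $SW_X\neq 0$, and that $[T]\neq 0$ in $H_2(X)$: if $SW_X=0$ then $SW_{X_{K_i}}=0$ for every knot and Seiberg--Witten theory distinguishes nothing, while if $[T]=0$ then $\Delta_K(2[T])=\Delta_K(1)=1$ and knot surgery never changes $SW$. Pinning down these ``circumstances'' is part of the point.

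The key algebraic fact is that, $X$ being simply connected, $H_2(X;\Z)$ is a finitely generated \emph{free} abelian group, so $R:=\Z[H_2(X)]$ is a ring of Laurent polynomials over $\Z$: an integral domain --- indeed a UFD --- whose only units are the elements $\pm g$, $g\in H_2(X)$. Moreover, since $[T]$ has infinite order, the substitution $\Z[t^{\pm1}]\to R$, $t\mapsto\exp(2[T])$, is injective, so $\Delta_{K_1}(2[T])\neq\Delta_{K_2}(2[T])$. Identifying $H_2(X_{K_i})$ with $H_2(X)$ via the homeomorphisms of Theorem~\ref{FSknot} (so that $[T]$ is the same class on both sides), that theorem gives $SW_{X_{K_i}}=SW_X\cdot\Delta_{K_i}(2[T])$, and since $R$ is a domain with $SW_X\neq 0$ we get $SW_{X_{K_1}}\neq SW_{X_{K_2}}$ \emph{as elements of $R$}. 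That is the part that is prima facie clear; what is not is that a diffeomorphism $X_{K_1}\to X_{K_2}$ need not match up these two identifications of second homology, so that $SW_{X_{K_1}}$ and $SW_{X_{K_2}}$ could differ merely by an automorphism of $H_2(X)$.

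So let $f\colon X_{K_1}\to X_{K_2}$ be a diffeomorphism; it induces $\psi\in\mathrm{Aut}(H_2(X),Q_X)$ with $\psi_*(SW_{X_{K_1}})=\pm SW_{X_{K_2}}$ in $R$ (I take $f$ orientation-preserving; otherwise one passes to $\overline{X_{K_2}}$, which is again a knot surgery and has the same Alexander polynomial). I would first compare Newton polytopes. Since $\Delta_{K_i}$ is palindromic of degree $n_i:=\deg\Delta_{K_i}$ with nonzero leading coefficient, and the Newton polytope of a product in the domain $R$ is the Minkowski sum of Newton polytopes, $\mathrm{Newt}(SW_{X_{K_i}})=\mathrm{Newt}(SW_X)+[-2n_i[T],\,2n_i[T]]$. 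Feeding this into $\psi\bigl(\mathrm{Newt}(SW_{X_{K_1}})\bigr)=\mathrm{Newt}(SW_{X_{K_2}})$ and comparing widths along functionals annihilating $[T]$, together with the volume (preserved by $\psi\in GL(H_2(X))$), forces $\psi[T]=\pm[T]$ and $n_1=n_2$. Then $\psi_*$ fixes the subring $\Z[\langle[T]\rangle]$ up to the involution $\exp([T])\mapsto\exp(-[T])$, under which $\Delta_{K_1}(2[T])$ is invariant by palindromicity, so the relation becomes $\psi_*(SW_X)\cdot\Delta_{K_1}(2[T])=\pm SW_X\cdot\Delta_{K_2}(2[T])$. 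I would then factor both sides in $R$: the irreducible factors coming from $\Delta_{K_i}(2[T])$ lie in $\Z[\langle[T]\rangle]$ and stay irreducible in $R$ (passing from $\Z$ to a Laurent polynomial coefficient ring is a regular extension on fraction fields and introduces no new factorizations of a primitive polynomial). Cancelling the irreducible factors supported on the line $\langle[T]\rangle$ from both sides --- legitimate precisely because $\psi_*$ preserves that line --- shows $\Delta_{K_1}(2[T])$ and $\Delta_{K_2}(2[T])$ are associates in $R$; comparing their Newton polytopes (both symmetric about $0$) and using $\Delta_{K_i}(1)=1$ to kill the unit gives $\Delta_{K_1}(2[T])=\Delta_{K_2}(2[T])$, whence $\Delta_{K_1}=\Delta_{K_2}$, a contradiction.

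The main difficulty is the interaction of the automorphism $\psi$ with the factorization of $SW_X$ itself: if $SW_X$ already has irreducible factors supported on $\langle[T]\rangle$ --- as it does for the elliptic surfaces, where $SW_{E(n)}=(\exp([T])-\exp(-[T]))^{\,n-2}$ --- then one cannot simply read off which factors of $SW_{X_{K_1}}$ are sent to which factors of $SW_{X_{K_2}}$, and it is for this reason that the Newton polytope step, which pins $\psi[T]=\pm[T]$ (and so makes $\psi_*$ harmless on $\Z[\langle[T]\rangle]$), has to be carried out first. Making that step unconditional --- in particular when $\mathrm{Newt}(SW_X)$ is not full-dimensional in $H_2(X)\otimes\R$ --- is where I expect the real work to lie.
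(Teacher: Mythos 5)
Your framing of the problem is exactly right --- the whole issue is that a diffeomorphism $X_{K_1}\to X_{K_2}$ identifies the two copies of $H_2(X)$ only up to an automorphism $\psi$, and you are also right to flag the hypotheses ($SW_X\neq 0$, $[T]$ of infinite order, $H_2$ free) that the paper leaves implicit. But your proof has a genuine gap, and it sits at the one step everything else depends on: establishing that $\psi[T]=\pm[T]$. The Newton polytope relation you write down, $\psi(P)+2n_1[-\psi[T],\psi[T]]=P+2n_2[-[T],[T]]$ with $P=\mathrm{Newt}(SW_X)$, does not by itself pin down $\psi[T]$: Minkowski decompositions of a lattice polytope are not unique, and if $P$ itself has segment summands (or is low-dimensional, as for the elliptic surfaces where $P$ is a segment along $[T]$, or a point when $SW_{E(2)}=1$), the ``widths along functionals annihilating $[T]$'' comparison can be satisfied by automorphisms that move $[T]$ off its line. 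You acknowledge this yourself (``where I expect the real work to lie''), but that concession is precisely an admission that the proof is not complete --- and without $\psi[T]=\pm[T]$ your subsequent cancellation of the irreducible factors supported on $\Z[\langle[T]\rangle]$ is not legitimate, since that subring is then not $\psi_*$-invariant.

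It is worth seeing how the paper sidesteps this entirely: it never proves $\psi[T]=\pm[T]$. Instead it defines, for an irreducible $\alpha$ and the automorphism $\phi=\psi_*$, the count $\G_{\alpha,\phi}(x)$ of irreducible factors of $x$ lying in the full orbit $\{\phi^n(\alpha),\phi^n(\overline{\alpha})\}_{n\in\Z}$. This is $\phi$-invariant \emph{by construction}, so applying it to $\phi(SW_X\cdot\Delta_{K_1})=SW_X\cdot\Delta_{K_2}$ cancels the $SW_X$ contributions with no analysis of how $\phi$ acts on the factors of $SW_X$ (your ``main difficulty'' evaporates). The only place a statement like $\phi^n([T])=[T]^{\pm1}$ is needed is in showing that $\G_{\alpha,\phi}$ restricted to $\Z[\langle[T]\rangle]$ agrees with $\G_{\alpha,\mathrm{id}}$, and there it comes cheaply: if $\phi^n(\alpha)$ divides an element of $\Z[\langle[T]\rangle]$ and $\alpha$ has at least two terms, then $u\phi^n(\alpha)\in\Z[\langle[T]\rangle]$ for a unit $u$, and comparing two monomials forces $\phi^n([T])=[T]^{\pm1}$, hence $\phi^n(\alpha)=\alpha$ or $\overline{\alpha}$. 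The symmetry of the Alexander polynomial then guarantees some irreducible $\alpha$ with $\G_{\alpha,\mathrm{id}}(\Delta_{K_1})>\G_{\alpha,\mathrm{id}}(\Delta_{K_2})$, which is the contradiction. If you want to salvage your route, the honest options are either to prove the polytope rigidity statement in full generality (hard, and probably false without extra input) or to replace it with an orbit-counting device of this kind.
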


These subtleties arises because of the somewhat imprecise way we have described $SW_X \in \Z[H_2(X)]$ as an invariant of $X$. In this case, it should really be though of as an invariant \emph{up to automorphisms of} $\Z[H_2(X)]$. Here is why: The Seiberg-Witten invariant is typically defined as a map $\sw:$ \spinc $(X)  \rightarrow \Z$. We encode this information as an element of $\Z[H_2(X)]$ by defining $SW_X := \sum{\sw(\fs)PD(c_1(\fs))}$ where the sum is taken over all \spinc  structures on $X$. When we do knot surgery on $X$ to produce $X_K$, our new Seiberg-Witten invariant $SW_{X_K}$ is an element of $\Z[H_2(X_K)]$. We can think of this as an element in $\Z[H_2(X)]$ --- which is what we do implicitly in the knot surgery formula --- because $H_2(X)$ is isomorphic to $H_2(X_K)$. In fact, this isomorphism is canonical, \emph{but only with respect to the surgery}. Different knot surgeries, even surgeries that give diffeomorphic manifolds, will induce different isomorphisms of $H_2$, and hence we may manifest the resulting Seiberg-Witten invariants as different elements of $\Z[H_2(X)]$.

Consider the following illustrative example: 
Suppose $X$ is a 4-manifold containing two tori, $T_1$ and $T_2$, representing different homology classes such that there is a self-diffeomorphism of $X$ taking $T_1$ to $T_2$. For a fixed knot $K$, do knot surgery on $T_1$ and $T_2$ forming $X_1$ and $X_2$. Clearly knot surgery can be performed in such a way that these manifolds are diffeomorphic, but note that their Seiberg-Witten invariants, as elements in $\Z[H_2(X)]$, will be different. According to the knot surgery formula, if $c_1(\fs)$ is a basic class of $X$, then on $X_1$ we get new basic classes of the form $c_1(\fs) + n[T_1]$, whereas our new basic classes on $X_2$ are of the form $c_1(\fs) + n[T_2]$. However, the diffeomorphism of $X_1$ to $X_2$ induces an automorphism of $H_2(X)$ that takes $[T_1]$ to $[T_2]$ (and consequently takes $SW_{X_1}$ to $SW_{X_2}$).

In the case at hand, where $\Delta_{K_1} \neq \Delta_{K_2}$ and we want to show $X_{K_1}$ is not diffeomorphic to $X_{K_2}$, we will need to associate to each element of $\Z[H_2(X)]$ a quantity that is invariant under automorphisms of $H_2(X)$. If $\alpha = \sum a_i [h_i] \in \Z[H_2(X)]$, denote $\overline{\alpha} = \sum a_i [h_i]^{-1}$.  

\begin{defin}
Suppose $H_2(X)$ is torsion free, $\alpha$ is an irreducible element of $\Z[H_2(X)]$, and $\phi$ is an automorphism of $\Z[H_2(X)]$, (which is the linear extension of an automorphism of $H_2(X)$). Given $x \in \Z[H_2(X)]$, define $\G_{\alpha,\phi}(x)$ as the number of elements of the form $\phi^n(\alpha)$ or $\phi^n(\overline{\alpha})$ for any $n\in \Z$ that can be factored out of $x$ counting multiplicity.

\end{defin}

This is a well defined invariant because $\Z[H_2(X)]$ is a UFD. Moreover, $\G$ has the following basic properties:
\begin{propo}  

\begin{itemize}
\item[(i)] For $a, b \in \Z[H_2(X)]$, we have  $\G_{\alpha,\phi}(ab) = \G_{\alpha,\phi}(a) + \G_{\alpha,\phi}(b)$
\item[(ii)] $\G_{\alpha,\phi} \circ \phi = \G_{\alpha,\phi}$
\item[(iii)] $\G_{\alpha,\phi}(\beta) = \G_{\alpha,id}(\beta)$ when $\alpha,\beta \in \Z[\langle[T]\rangle]$, and $\alpha$ has more than one term in its (unique) expansion as $\sum a_i [T]^i$.
\end{itemize}
\end{propo}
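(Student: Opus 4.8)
The idea is to reexpress $\G_{\alpha,\phi}$ as a sum of valuations, after which (i) and (ii) are formal, and then to prove (iii) by showing that inside $\Z[\langle[T]\rangle]$ no new divisors can appear in the $\phi$-orbit of $\alpha$. Since $H_2(X)$ is free of finite rank, $\Z[H_2(X)]$ is a Laurent polynomial ring over $\Z$, hence a UFD; write $v_p(x)$ for the exponent of an irreducible $p$ in a factorization of a nonzero $x$ (this depends only on the associate class $[p]$). Note $x\mapsto\overline{x}$ is the ring automorphism induced by $-\mathrm{id}_{H_2(X)}$, and $\phi$ together with all powers $\phi^n$ are ring automorphisms, so every $\phi^n(\alpha),\phi^n(\overline{\alpha})$ is again irreducible. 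Letting $\mathcal S$ be the set of associate classes of the $\phi^n(\alpha)$ and $\phi^n(\overline{\alpha})$, $n\in\Z$, the Definition reads $\G_{\alpha,\phi}(x)=\sum_{[p]\in\mathcal S}v_p(x)$, a finite sum for $x\neq0$ (associate classes are counted once, not once per index $n$).

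From this, (i) is immediate: $v_p(ab)=v_p(a)+v_p(b)$, summed over $[p]\in\mathcal S$. For (ii), a ring automorphism $\phi$ carries irreducibles to irreducibles with $v_{\phi(p)}(\phi(x))=v_p(x)$, and $\mathcal S$ is $\phi$-stable because $\{\phi^n(\alpha)\}_{n}$ and $\{\phi^n(\overline{\alpha})\}_{n}$ are each a single $\phi$-orbit; reindexing, $\sum_{[p]\in\mathcal S}v_p(\phi(x))=\sum_{[q]\in\phi^{-1}\mathcal S}v_q(x)=\sum_{[q]\in\mathcal S}v_q(x)$, so $\G_{\alpha,\phi}\circ\phi=\G_{\alpha,\phi}$.

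For (iii) the content is to prove: if $\phi^n(\alpha)$ or $\phi^n(\overline{\alpha})$ divides $\beta$, then it is already an associate of $\alpha$ or of $\overline{\alpha}$. Granting this, the classes of $\mathcal S_{\alpha,\phi}$ dividing $\beta$ are precisely those of $\mathcal S_{\alpha,\mathrm{id}}=\{[\alpha],[\overline{\alpha}]\}$ dividing $\beta$ (since $[\alpha]=[\phi^0(\alpha)]$ and $[\overline{\alpha}]=[\phi^0(\overline{\alpha})]$ lie in $\mathcal S_{\alpha,\phi}$ as well), so the two valuation sums agree. To prove the claim, write $[T]=d\sigma$ with $\sigma\in H_2(X)$ primitive ($[T]\neq0$, else $\alpha=\sum a_i[T]^i$ would collapse to one term), so that $\alpha,\beta\in\Z[\langle[T]\rangle]\subseteq\Z[\langle\sigma\rangle]$. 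The lemma I need is that $\Z[\langle\sigma\rangle]$ is \emph{saturated} in $\Z[H_2(X)]$, i.e. every irreducible factor in $\Z[H_2(X)]$ of a nonzero element of $\Z[\langle\sigma\rangle]$ is an associate of an element of $\Z[\langle\sigma\rangle]$; I would prove this by extending $\sigma$ to a basis $\sigma=e_1,\dots,e_k$ of $H_2(X)$, writing $\Z[H_2(X)]=\Z[\langle\sigma\rangle][x_2^{\pm1},\dots,x_k^{\pm1}]$ with $x_j=[e_j]$, and applying the standard Gauss-lemma degree argument one variable at a time (a factor of an element of $x_k$-degree $0$ has, after clearing the unit $x_k^{m}$, again $x_k$-degree $0$, hence lies in $\Z[\langle\sigma\rangle][x_2^{\pm1},\dots,x_{k-1}^{\pm1}]$; then induct downward).

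Now let $\psi\in\mathrm{Aut}(H_2(X))$ induce $\phi$ and suppose $\phi^n(\alpha)\mid\beta$, with $\alpha=\sum_{i\in I}a_i[i[T]]$, $|I|\geq2$. The support of $\phi^n(\alpha)$ in $H_2(X)$ is $\{\,i\,d\,\psi^n(\sigma):i\in I\,\}$. By the lemma, $\phi^n(\alpha)=\pm[h]\,q$ for some $h\in H_2(X)$ and $q\in\Z[\langle\sigma\rangle]$; comparing supports, $\{\,i\,d\,\psi^n(\sigma)-h:i\in I\,\}\subseteq\langle\sigma\rangle$, and subtracting two of these (possible since $|I|\geq2$) gives $(i_1-i_2)\,d\,\psi^n(\sigma)\in\langle\sigma\rangle$, so $\psi^n(\sigma)$ lies in the saturation of $\langle\sigma\rangle$, which is $\langle\sigma\rangle$ itself by primitivity; as $\psi^n$ preserves primitivity, $\psi^n(\sigma)=\pm\sigma$, whence $\phi^n(\alpha)\in\{\alpha,\overline{\alpha}\}$, and the same argument gives $\phi^n(\overline{\alpha})\in\{\overline{\alpha},\alpha\}$. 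This proves the claim, and (iii) follows. The one genuinely non-formal step is this saturation-plus-support argument, which is exactly where the hypothesis ``$\alpha$ has more than one term'' enters: it forces two distinct support points and thereby pins $\psi^n$ down on the line $\R[T]$ up to sign; without it (iii) is false, as a monomial can be divided off in many spurious ways.
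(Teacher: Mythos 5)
Your proof is correct and follows essentially the same route as the paper: (i) and (ii) are treated as formal consequences of unique factorization, and (iii) reduces to showing that any divisor of $\beta$ of the form $\phi^n(\alpha)$ or $\phi^n(\overline{\alpha})$ lies, up to a unit, in the group ring of the cyclic subgroup, after which the two-term hypothesis forces $\phi^n([T])=[T]^{\pm1}$. The only difference is one of care: you explicitly prove the saturation of $\Z[\langle\sigma\rangle]$ (which the paper asserts when it says $\beta$ factors into irreducibles in $\Z[\langle[T]\rangle]$) and you correctly work with a primitive class $\sigma$ rather than $[T]$ itself, which tightens a minor imprecision in the paper's version.
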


\begin{proof}

Only the third property deserves further comment. It is sufficient to show that if $\phi^n(\alpha)$ or $\phi^n(\overline{\alpha})$ can be factored out of $\beta$, then these factors are equal to $\alpha$ or $\overline{\alpha}$. 

Suppose, first of all, that $\phi^n(\alpha)$ can be factored out of $\beta$. Since $\beta$ can be factored into irreducibles that are in $\Z[\langle[T]\rangle]$, we have that $u\phi^n(\alpha)$ is in $\Z[\langle[T]\rangle]$ for some unit $u \in \Z[H_2(X)]$. Since $\alpha \in \Z[\langle[T]\rangle]$, we can write $u\phi^n(\alpha) = u\sum{a_i\phi^n([T]^i)}$ for $a_i \in \Z$.

Now, $u\sum{a_i\phi^n([T]^i)}$ being in $\Z[\langle[T]\rangle]$ implies that $u\phi^n([T]^i)$ is too (at least for the indices $i$ corresponding to non-zero terms in the sum), and since the summation must have more than one term by hypothesis, we can find $u\phi^n([T]^{i}) = \pm 1\cdot[T]^{j}$ and $u\phi^n([T]^{i'}) = \pm 1\cdot[T]^{j'}$ for some $i \neq i'$ and $j \neq j'$. (Recall that units of $\Z[H_2(X)]$ are all of the form $\pm 1 \cdot h$ for $h\in H_2(X)$.) Therefore, $\pm 1\cdot[T]^{-j'} = u^{-1}\phi([T]^{-i'})$, and this implies that $[T]^{j-j'} = \phi^n([T]^{i-i'})$. Since $\phi^n$ must preserve degree, we get that that $\phi^n([T]) = [T]$ or $[T]^{-1}$, and hence $\phi^n(\alpha) = \alpha$ or $\overline{\alpha}$. 

The same proof works when $\beta$ has a factor of the form $\phi^n(\overline{\alpha})$.

\end{proof}

These properties are sufficient to prove theorem 2.

\begin{proof}[Proof of Theorem \ref{main}]
To simplify our notation, write $\Delta_K$ for $\Delta_K(2[T])$. Assume $\Delta_{K_1} \neq \Delta_{K_2}$ but that $X_{K_1}$ is diffeomorphic to $X_{K_2}$. We will derive a contradiction.
According to the knot surgery formula, the Seiberg-Witten invariants of $X_{K_1}$ and $X_{K_2}$ are $SW_X\,\cdot \,\Delta_{K_1} $ and $SW_X\,\cdot \, \Delta_{K2}$ respectively. A diffeomorphism $\phi : X_{K_1} \rightarrow X_{K_2}$ induces an automorphism $\phi_* : \Z[H_2(X_{K_1})] \rightarrow \Z[H_2(X_{K_2})]$ where $\phi_*(SW_X\,\cdot \,\Delta_{K_1}) = SW_X\,\cdot \,\Delta_{K_2}$

\begin{claim}
If $\Delta_{K_1} \neq \Delta_{K_2}$, then (without loss of generality) there is an irreducible factor $\alpha$ of $\Delta_{K_1}$ such that $\Gamma_{\alpha,id}(\Delta_{K_1}) > \Gamma_{\alpha,id}(\Delta_{K_2})$

\end{claim}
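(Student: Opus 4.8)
The plan is to turn the Claim into an elementary statement about unique factorization in the ring $R := \Z[\langle [T]\rangle]$. Since $H_2(X)$ is torsion free and we may assume $[T]\neq 0$ (if $2[T]=0$ then $\Delta_{K_i}(2[T])=1$ and the hypothesis of the Claim is empty), $R$ is isomorphic to the Laurent ring $\Z[t,t^{-1}]$, hence a UFD, and both $\Delta_{K_1}=\Delta_{K_1}(2[T])$ and $\Delta_{K_2}=\Delta_{K_2}(2[T])$ lie in it. It is in $R$ that I read ``irreducible factor of $\Delta_{K_i}$'' — the same convention implicit in the proof of the Proposition, where only factorizations internal to $R$ are seen by $\Gamma$.

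First I would record the two features of the symmetrized Alexander polynomial that survive the substitution $t\mapsto 2[T]$ and drive the argument. It is symmetric, so $\overline{\Delta_{K_i}}=\Delta_{K_i}$ in $R$; and it has value $1$ at $t=1$, so the augmentation $R\to\Z$ sending $[T]\mapsto 1$ carries each $\Delta_{K_i}$ to $1$. Because the substitution $t\mapsto 2[T]$ is injective ($2[T]$ has infinite order), the hypothesis $\Delta_{K_1}\neq\Delta_{K_2}$ persists in $R$; together with the augmentation property this upgrades it to: $\Delta_{K_1}$ and $\Delta_{K_2}$ are \emph{not associates} in $R$. Indeed a unit $\pm[T]^k$ carrying one to the other would have to be fixed by $\overline{\;\cdot\;}$ (both sides are symmetric) and sent to $1$ by the augmentation, hence would equal $1$.

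Now I would argue as follows. Since $\Delta_{K_1}$ and $\Delta_{K_2}$ are non-associate, some irreducible $\pi\in R$ occurs in them with different multiplicities, say $v_\pi(\Delta_{K_1})\neq v_\pi(\Delta_{K_2})$; after interchanging $K_1$ and $K_2$ if necessary — this is the ``without loss of generality'' in the statement — I may assume $v_\pi(\Delta_{K_1})>v_\pi(\Delta_{K_2})$, so in particular $\pi\mid\Delta_{K_1}$. Since $v_{\bar\pi}(x)=v_\pi(\overline{x})$ for every $x$, the symmetry $\overline{\Delta_{K_i}}=\Delta_{K_i}$ gives $v_{\bar\pi}(\Delta_{K_i})=v_\pi(\Delta_{K_i})$ for $i=1,2$. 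Unwinding the definition of $\Gamma$, this means $\Gamma_{\pi,id}(\Delta_{K_i})$ equals $v_\pi(\Delta_{K_i})+v_{\bar\pi}(\Delta_{K_i})=2\,v_\pi(\Delta_{K_i})$ when $\pi\not\sim\bar\pi$ and $v_\pi(\Delta_{K_i})$ when $\pi\sim\bar\pi$; in either case it is a fixed positive multiple of $v_\pi(\Delta_{K_i})$. Hence $\Gamma_{\pi,id}(\Delta_{K_1})>\Gamma_{\pi,id}(\Delta_{K_2})$, and $\alpha:=\pi$ is the required irreducible factor.

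The computations are all immediate; the one point that genuinely needs care is the use of \emph{symmetry} (and of the normalization $\Delta_K(1)=1$) to make $\Gamma$ detect the discrepancy. Without $\Delta_K(1)=1$ one could not pass from ``$\Delta_{K_1}\neq\Delta_{K_2}$'' to ``not associates'', and the Claim would simply be false (a nontrivial unit multiple of $\Delta_{K_1}$ leaves every $\Gamma_{\alpha,id}$ unchanged). And without symmetry, the difference in $v_\pi$ could in principle be masked by an opposite difference in $v_{\bar\pi}$, since $\Gamma_{\pi,id}$ by definition cannot separate $\pi$ from $\bar\pi$; symmetry is exactly what forces $v_\pi$ and $v_{\bar\pi}$ to move together, so that the UFD identity $\Delta_{K_1}^2\not\sim\Delta_{K_2}^2$ becomes visible to $\Gamma$.
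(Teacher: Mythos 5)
Your proof is correct and follows essentially the same route as the paper's: locate an irreducible $\pi$ occurring with different multiplicities in $\Delta_{K_1}$ and $\Delta_{K_2}$, and use the symmetry $\overline{\Delta_{K_i}}=\Delta_{K_i}$ to rule out the multiplicity of $\bar\pi$ compensating, so that $\Gamma_{\pi,id}$ sees the difference. You are in fact slightly more careful than the paper, which leaves implicit the step from $\Delta_{K_1}\neq\Delta_{K_2}$ to ``not associates'' (your use of the normalization $\Delta_K(1)=1$ together with symmetry to kill the potential unit), and which does not spell out that the substitution $t\mapsto 2[T]$ preserves the inequality.
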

\begin{proof}
The only way this can fail to be be true is if there exists a factor $\alpha$ of $\Delta_{K_1}$ that is not present in $\Delta_{K_2}$ with the same multiplicity, but the sums of the multiplicities of $\alpha$ and $\overline{\alpha}$ in $\Delta_{K_1}$ and $\Delta_{K_2}$ are the same. This cannot happen because Alexander polynomials are symmetric: there exists a factorization such that the multiplicity of any factor $\alpha$ is equal to the multiplicity of $\overline{\alpha}$ (this is trivially true if $\alpha = \overline{\alpha}$).  
\end{proof}


For this choice of $\alpha$, via property (iii), this claim implies that $\Gamma_{\alpha,\phi_*}(\Delta_{K_1}) > \Gamma_{\alpha,\phi_*}(\Delta_{K_2})$.

To the equality $\phi_*(SW_X\,\cdot \,\Delta_{K_1}) = SW_X\,\cdot \,\Delta_{K_2}$ we apply $\G_{\alpha,\phi_*}$ (here shorthanded as $\G$) and use properties (i) and (ii) above:
\begin{align*}
\G(\phi_*(SW_X\cdot \Delta_{K_1})) &= \Gamma(SW_X\cdot \Delta_{K_2})\\
\G(\phi_*(SW_X)) + \G(\phi_*(\Delta_{K_1}))   &=   \G(SW_X) + \Gamma(\Delta_{K_2})\\
 \G(\Delta_{K_1}) &= \G(\Delta_{K_2})
\end{align*}

This, however, contradicts our choice of $\alpha$.

\end{proof}

\begin{remk} An essential hypothesis of this theorem was that $H_2(X)$ be torsion free: Otherwise $\Z[H_2(X)]$ is not a UFD and we cannot define $\G$. Note, however, that in the case $H_2(X)$ has torsion, the same proof can be carried out as long as the image of $SW_X$ in $\Z[H_2(X)/tor]$ is non-trivial. Simply replace every instance of $\Z[H_2(X)]$ above with $\Z[H_2(X)/tor]$.
\end{remk}

\begin{remk} Fintushel and Stern have applied the knot surgery technique on \textit{null-homologous} tori to produce exotic embeddings of surfaces in a 4-manifold, \cite{surfaces}\cite{addendum}. This process is called rim-surgery. The above proof can also be applied to rim surgery to show that any two knots with different Alexander polynomials will give rise to inequivalent rim-surgeries. The details for the relevant Seiberg-Witten invariant can be gleaned from \cite{FSS}.  
\end{remk}

\section*{Acknowledgments}
The author would like to thank Tim Cochran, Danny Ruberman, and Ron Fintushel (who was the author's disseration advisor when this note was written) for their comments and encouragement.

\end{document}